\newtheorem{thm}{Theorem}[section]%%[chapter]
\newtheorem{prop}[thm]{Proposition}
\theoremstyle{definition}
\newtheorem{rmk}[thm]{Remark}
\let\emptyset=\varnothing
\begin{document}

\title{Multiple fixed point theorems for contractive and Meir-Keeler type mappings defined on partially ordered spaces with a distance}
\author{Mitrofan M. Choban$^{1}$ and Vasile Berinde$^{2,3}$}

%MSC: 47H10; 54H25
\medskip
%Keywords: distance space, coincidence point, fixed point
\begin{abstract}
We introduce and study a general concept of multiple fixed point for mappings defined on partially ordered distance spaces in the presence of a contraction type condition and appropriate monotonicity properties. This notion and the obtained results complement the corresponding ones from [Choban, M., Berinde, V., {\it A general concept of multiple fixed point for mappings defined on  spaces with a distance} (submitted)] and also simplifies some concepts of multiple fixed point considered by various authors in the last decade or so. 
\end{abstract}

\maketitle

\pagestyle{myheadings} \markboth{Mitrofan M. Choban and Vasile Berinde} {Multiple fixed point theorems for contractive and Meir-Keeler}

 \section{Introduction}

In a previous paper \cite{ChoB}, the authors have introduced and studied a general concept of multidimensional fixed point for mappings defined on a distance space and satisfying a certain contraction condition. 

Several interesting new results that generalise, extend and unify corresponding related results from literature for the case of non ordered distance spaces were obtained. However, the great majority of the multidimensional fixed point theorems existing in literature were established in the setting of a partially ordered metric space or of a partially ordered generalised metric space. Therefore, the main aim of this paper is to study the concept of multidimensional fixed point introduced in  \cite{ChoB} for the case of mappings defined on partially ordered distance space, thus extending and complementing most of the results established in \cite{ChoB}.

We start by presenting a brief survey on the notion of {\it multidimensional fixed point}, which  naturally emerged from the rich literature produced in the last four decades devoted to  coupled fixed points. The concept itself of {\it coupled fixed point}  has been first introduced and studied by V. I. Opoitsev,  in a series of papers he published in the period 1975-1986, see \cite{Op75}-\cite{Op86}. Opoitsev has been inspired by some concrete problems arising in the dynamics of collective behaviour in mathematical economics and considered the coupled fixed point problem for mixed monotone nonlinear operators which also satisfy a nonexpansive type condition.  
 
In 1987, Guo and Lakshmikantham  \cite{Guo},  apparently not being aware of Opoitsev's previous results  \cite{Op75}-\cite{Op86}, have studied coupled fixed points in connection with coupled quasi-solutions of an initial value problem for ordinary differential equations. Amongst the subsequent developments we quote the following works: \cite{Guo88}; \cite{Chen}, containing coupled fixed point results of $\frac{1}{2}$-$\alpha$-condensing and mixed monotone operators, where $\alpha$ denotes the Kuratowski's measure of non compactness, thus extending some previous results from \cite{Guo} and \cite{Shen};  \cite{Chang91}, which discusses some existence results and iterative approximation of coupled fixed points for mixed monotone condensing set-valued operators;  \cite{Chang96} where the authors obtained coupled fixed point results of $\frac{1}{2}$-$\alpha$-contractive and generalized condensing mixed monotone operators.

More recently,  Gnana Bhaskar and Lakshmikantham in \cite{Bha} established coupled fixed point results for mixed monotone operators in partially ordered metric spaces in the presence of a Bancah contraction type condition. Essentialy, the results by Bhaskar and Lakshmikantham in \cite{Bha}  combined, in the context of bivariable mixed monotone mappings, the main fixed point results previously obtained by Nieto and Rodriguez-Lopez  \cite{Nie06} and  \cite{Nie07}, for the case of one variable increasing and decreasing nonlinear operator, respectively.  The last two papers are, in turn, a continuation of the hybrid fixed point theorem established in the seminal paper of Ran and Reurings \cite{Ran}, which has the merit to combine a metrical fixed point theorem (the contraction mapping principle) and an order theoretic fixed point result (Tarski's fixed point theorem). 

Various applications of the theoretical results in coupled fixed point theory were also considered, for the case of: a) Uryson integral equations \cite{Op78}; b) a system of Volterra integral equations  \cite{Chen}, \cite{Chang96}; c) a class of functional equations arising in dynamic programming \cite{Chang91}; d) initial value problems for first order differential equations with discontinuous right hand side \cite{Guo}; e) (two point) periodic boundary value problems \cite{Ber12a}, \cite{Bha}, \cite{Cir}, \cite{Urs}; f) integral equations and systems of integral equations \cite{Agha}, \cite{Algha}, \cite{Aydi}, \cite{BS}, \cite{Gu}, \cite{Hus}, \cite{Shat}, \cite{Sint}, \cite{Xiao}; g) nonlinear elliptic problems and delayed hematopoesis models \cite{Wu}; h) nonlinear Hammerstein integral equations \cite{Sang}; i) nonlinear matrix and nonlinear quadratic equations \cite{AghaFPT}, \cite{BS}; j) initial value problems for ODE \cite{Amini}, \cite{Samet} etc.
 
 For a very recent account on the developments of coupled fixed point theory, we also refer to \cite{JNCA}.
 
On the other hand, in 2010, Samet and Vetro  \cite{SV} apart of some coupled fixed point results they have established, considered a concept of {\it fixed point of m-order} as a natural extension
of the notion of coupled fixed point.  Then, in 2011, mainly inspired by \cite{Bha}, Berinde and Borcut \cite{BB1} introduced the concept of {\it triple fixed
point} and proved existence and existence and uniqueness triple fixed point theorems for three-variable mixed monotone mappings, while, in 2012, Karapinar and Berinde \cite{KB}, have studied quadruple fixed points of nonlinear contractions in partially ordered metric spaces. 

After these starting papers, a substantial number of articles were dedicated to the study of triple fixed points, quadruple fixed points, as well as to multiple fixed points (also called  {\it fixed point
of m-order}, or  "a multidimensional fixed point", or "an $m$-tuplet
fixed point", or "an $m$-tuple fixed point"), see \cite{Aga14}, \cite{Aga}, \cite{Al}, \cite{Kar13}, \cite{Kar13a}, \cite{Lee}, \cite{Mutlu}, \cite{Ola}, \cite{R1}-\cite{Rus}, \cite{Sol}, \cite{Wang}, \cite{ZCS}, which form a very selective list contributions.

Starting from this background, the main aim of the present paper is to study the concept of multidimensional fixed point introduced in \cite{ChoB} but for mappings defined on partially ordered distance space, in the presence of a contraction type condition and appropriate monotonicity properties, thus extending and complementing the results established in \cite{ChoB}.

This approach is based on the idea to reduce the study of multidimensional  fixed points and coincidence points to 
the study of usual one-dimensional fixed points for an associate operator. Note that, the first author who reduced the problem of finding a coupled fixed point of mixed monotone operators to the problem of finding a fixed point of an increasing one variable operator was Opoitsev, see for example  \cite{Op78}.

 \section{Preliminaries }

By a space we understand a  topological  $T_0$-space.
We use the terminology from \cite{Eng, GD, RP, C1}.

Let $X$ be a non-empty set and $d : X\times X \rightarrow \mathbb R$ be a mapping such that:

($i_m$) $d(x, y) \geq  0$, for all $x, y \in X$;

($ii_m$) $d(x, y) + d(y,x) = 0$ if and only if $x = y$.

Then $d$ is called a {\it distance} on $X$, while  $(X, d)$ is called a {\it distance space}.

Let $d$ be a distance on $X$ and $B(x,d,r)$ = $\{y \in X: d(x, y) < r\}$ be the {\it ball} with the center $x$
and radius $r > 0$. The set $U \subset X$ is called {\it $d$-open} if for any $x \in U$ there exists $r > 0$
such that $B(x,d,r) \subset U$. The family $\mathcal T(d)$ of all $d$-open subsets is the topology on $X$ 
generated by $d$. A distance space is a {\it sequential space}, i.e., a space for which a set $B \subseteq X$ is closed if and only 
if together with any sequence it contains all its limits \cite{Eng}.
   
  Let $(X, d)$ be a  distance space, $\{x_n\}_{n \in \mathbb N}$ be a sequence in $X$ and $x \in X$. We say that the sequence   $\{x_n\}_{n \in \mathbb N}$ is:

1)  {\it convergent to} $x$ if and only if $\lim_{n\rightarrow \infty }d(x, x_n) = 0$. 
We denote this by $x_n\rightarrow x$ or $x = \lim_{n\rightarrow \infty }x_n$ (really, we may denote   $x \in \lim_{n\rightarrow \infty }x_n$);

2)   {\it convergent} if  it converges to some point $x$ in  $X$;

3)   {\it Cauchy} or {\it fundamental} if $\lim_{n, m\rightarrow \infty }d(x_n, x_m) = 0$.
    
  A  distance space $(X, d)$ is called {\it complete} if  every
Cauchy sequence in $X$ converges to some point $x$  in $X$.

    Let $X$ be a non-empty set and $d $ be a distance on $X$. Then:  
\begin{itemize}
\item $(X, d)$ is called a {\it symmetric space} and $d$ is called a {\it symmetric} on $X$ if
  
($iii_m$) $d(x, y)$ = $d(y, x)$, for all $x, y \in X$;
\item  $(X, d)$ is called a {\it quasimetric space} and $d$ is called a {\it quasimetric} on $X$  if

($iv_m$) $d(x, z) \leq d(x, y) + d(y, z)$, for all $x, y, z \in X$;
\item  $(X, d)$ is called a {\it metric space} and $d$ is called a {\it metric} if  $d$ is a symmetric and a  quasimetric, simultaneously.
\end{itemize}

      Let $X$ be a non-empty set and  $d(x, y) $ be a distance on $X$
   with the following property:

($N$) for each point $x \in X$ and any $\varepsilon > 0$ there exists $\delta = \delta (x,\varepsilon ) > 0$
such that from $d(x, y) \leq \delta $ and $d(y,z) \leq \delta $ it follows $d(x, z) \leq \varepsilon $.

\noindent
Then $(X, d)$ is called an {\it N-distance space} and $d$ is called an {\it N-distance}
on $X$.
If $d$ is a symmetric, then we say that $d$ is an $N$-symmetric.

Spaces with $N$-distances were studied by V. Niemyzki  \cite{Ne1} and by S. I. Nedev \cite{N}. If $d$ satisfies the condition 

($F$) for any $\varepsilon > 0$ there exists $\delta = \delta (\varepsilon ) > 0$
such that from $d(x, y) \leq \delta $ and $d(y,z) \leq \delta $ it follows $d(x, z) \leq \varepsilon $,

then  $d$ is called an {\it F-distance}  or a {\it Fr\'echet distance}  and  $(X, d)$ is 
called an {\it F-distance space}. Any $F$-distance $d$ is an $N$-distance, too.
If $d$ is a symmetric and an $F$-distance on a space $X$, then we say that $d$ is an $F$-symmetric.

 \begin{rmk}\label{rm1.1}  If $(X, d)$ is an $F$-symmetric space, then any convergent sequence is a 
Cauchy sequence. For $N$-symmetric spaces and for quasimetric spaces this assertion is not more true.
 \end{rmk}

 If $s > 0$ and $d(x,y) \leq s [d(x,z) + d(z, y)]$ for all points $x, y, z \in X$, then we say that $d$ is an {\it $s$-distance}.
 Any $s$-distance is an $F$-distance.

 A distance space $(X, d)$ is called an {\it H-distance space} if, for any two distinct points $x, y \in X$,
 there exists $\delta  = \delta (x, y) > 0$ such that $B(x, d, \delta ) \cap B(y, d, \delta ) = \emptyset $.
 
 \begin{rmk}\label{rm1.2} A distance space $(X, d)$  is an $H$-distance space if and
only if any convergent sequence  in $X$ has a unique limit point. 
 \end{rmk}

\medskip

 \section{Ordering on Cartesian product of distance spaces}

Let $(X,d)$ be a distance space, $m \in \mathbb N $ = $\{1, 2, ...\}$. On $X^m$ consider the distances
$$d^m((x_1,..., x_m), (y_1,...,y_m)) = sup \{d(x_i,y_i): i \leq m\}$$ 
and 
$$\bar{d}^m((x_1,..., x_m), (y_1,...,y_m)) = \sum_{i=1}^{m}  d(x_i,y_i).$$
Obviously, $(X^m, d^m)$ and $(X^m, \bar{d}^m)$ are distance spaces, too.

\begin{prop}\label{P2.1} (\cite{ChoB})   Let $(X, d)$ be a distance space. Then:

1. If $d$ is a symmetric, then $(X^m, d^m)$ and $(X^m, \bar{d}^m)$ are  symmetric spaces, too.

2. If $d$ is a quasimetric, then  $(X^m, d^m)$ and $(X^m, \bar{d^m})$ are  quasimetric spaces, too.

3.  If $d$ is a metric, then   $(X^m, d^m)$ and $(X^m, \bar{d}^m)$ are  metric spaces, too.

4.  If $d$ is an $F$-distance space, then  $(X^m, d^m)$ and $(X^m, \bar{d}^m)$ are   $F$-distance spaces, too.

5.   If $d$ is an $N$-distance space, then $(X^m, d^m)$ and $(X^m, \bar{d}^m)$ are   $N$-distance spaces, too.

6.  If $d$ is an $H$-distance space, then $(X^m, d^m)$ and $(X^m, \bar{d}^m)$ are   $H$-distance spaces, too.

7. If  $(X, d)$ is a $C$-distance space, then   $(X^m, d^m)$ and $(X^m, \bar{d}^m)$ are   $C$-distance spaces, too.

8.  If $(X, d)$ is a complete distance space, then $(X^m, d^m)$ and $(X^m, \bar{d}^m)$ are   complete distance spaces, too.

9.  If $d$ is an $s$-distance space, then $(X^m, d^m)$ and $(X^m, \bar{d}^m)$ are   $s$-distance spaces, too.

10. The spaces  $(X^m, d^m)$ and $(X^m, \bar{d}^m)$ share the same convergent sequences and the same Cauchy sequences.
Moreover, the distances $d^m$ and $\bar{d^m}$ are uniformly equivalent, i.e., for each $\varepsilon  > 0$, there exists $\delta $
= $\delta (\varepsilon ) > 0$ such that:

- from $d^m(x,y) \leq \delta $ it follows $\bar{d^m}(x,y) \leq \varepsilon $;

-  from $\bar{d^m}(x,y) \leq \delta $ it follows $d^m(x,y) \leq \varepsilon $. 
\end{prop}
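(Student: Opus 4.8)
The plan is to reduce everything to two elementary observations. First, for points $x=(x_1,\dots,x_m)$ and $y=(y_1,\dots,y_m)$ of $X^m$ one has the sandwich inequality
$$d^m(x,y)\ \le\ \bar{d}^m(x,y)\ \le\ m\cdot d^m(x,y);$$
second, each of $d^m(x,y)\le\varepsilon$ and $\bar{d}^m(x,y)\le\varepsilon$ forces $d(x_i,y_i)\le\varepsilon$ for every $i\le m$, while conversely $d(x_i,y_i)\le\varepsilon$ for all $i\le m$ gives $d^m(x,y)\le\varepsilon$. The sandwich inequality is exactly what item~10 asks for: it shows that $d^m$ and $\bar{d}^m$ have the same convergent sequences (with the same limits) and the same Cauchy sequences, and it exhibits the moduli of uniform equivalence ($\delta=\varepsilon/m$ in one implication, $\delta=\varepsilon$ in the other). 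Consequently, once any one of items 4--9 is verified for $d^m$ it transfers to $\bar{d}^m$, so it is enough to treat $d^m$, the computation for $\bar{d}^m$ being identical up to the factor $m$.

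For items 1--3 and~9 I would argue coordinatewise. If $d$ is symmetric then $d(x_i,y_i)=d(y_i,x_i)$ for each $i$, so $\sup_{i\le m}d(x_i,y_i)$ and $\sum_{i\le m}d(x_i,y_i)$ are symmetric in $x,y$, which is item~1. For the triangle-type inequalities one applies $\sup_{i\le m}(a_i+b_i)\le\sup_{i\le m}a_i+\sup_{i\le m}b_i$ and $\sum_{i\le m}(a_i+b_i)=\sum_{i\le m}a_i+\sum_{i\le m}b_i$ with $a_i=d(x_i,y_i)$ and $b_i=d(y_i,z_i)$; the same computation, carrying the constant $s$ through, gives the $s$-distance case. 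This settles items 2 and~9, and item~3 is the conjunction of items 1 and~2.

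For the conditions involving an $\varepsilon$--$\delta$ modulus I would distinguish according to whether the modulus is uniform in the base point. For an $F$-distance (item~4) it is: given $\varepsilon>0$ pick $\delta$ witnessing property $(F)$ of $d$ at level $\varepsilon$; then $d^m(x,y)\le\delta$ and $d^m(y,z)\le\delta$ give $d(x_i,y_i),d(y_i,z_i)\le\delta$, hence $d(x_i,z_i)\le\varepsilon$, for every $i$, whence $d^m(x,z)\le\varepsilon$. Item~7, for $C$-distance spaces, is handled by the same reduction to coordinates once its definition is recalled. For an $N$-distance (item~5) the modulus depends on the point, so at $x=(x_1,\dots,x_m)$ one sets $\delta=\min_{i\le m}\delta(x_i,\varepsilon)$ and repeats the coordinatewise argument. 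For completeness (item~8) one observes that $\{x^{(n)}\}_n$ is $d^m$-Cauchy iff each coordinate sequence $\{x^{(n)}_i\}_n$ is $d$-Cauchy (finiteness of $\{1,\dots,m\}$ makes the supremum harmless in the backward direction); completeness of $(X,d)$ then produces limits $a_i$ and $x^{(n)}\to(a_1,\dots,a_m)$ in $d^m$. For the $H$-distance property (item~6) one can invoke Remark~\ref{rm1.2}: convergence in $d^m$ implies coordinatewise convergence, so uniqueness of limits in $(X,d)$ yields uniqueness of limits in $(X^m,d^m)$, and $(X^m,\bar{d}^m)$ has the same convergent sequences by item~10; alternatively, if $x\ne y$ they differ in some coordinate $j$ and any $\delta$ with $B(x_j,d,\delta)\cap B(y_j,d,\delta)=\emptyset$ also satisfies $B(x,d^m,\delta)\cap B(y,d^m,\delta)=\emptyset$, and likewise for $\bar{d}^m$.

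I do not expect a genuine obstacle: the whole proposition unwinds to the sandwich inequality together with the coordinatewise description of convergence and of Cauchyness. The only two points needing mild care are that in the $N$-distance case the modulus must be chosen as the minimum of the $m$ coordinate moduli -- a single coordinate's $\delta$ need not work -- and that, since a general distance space has no triangle inequality, one must argue directly from the explicit $\sup$ and $\sum$ formulas defining $d^m$ and $\bar{d}^m$ rather than through metric-style estimates when identifying their convergent and Cauchy sequences.
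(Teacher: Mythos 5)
Your proof is correct: the sandwich inequality $d^m\le\bar{d}^m\le m\,d^m$ combined with the coordinatewise verifications (uniform modulus for the $F$-case, the minimum of the $m$ pointwise moduli for the $N$-case, coordinatewise Cauchyness and limits for completeness, the ball argument or Remark~\ref{rm1.2} for the $H$-property) is exactly the standard argument, and the paper itself states Proposition~\ref{P2.1} without proof, importing it from \cite{ChoB}, so there is no in-text proof to diverge from. The only item you cannot actually finish is 7, since a $C$-distance space is never defined in this paper; your explicit deferral there is the reasonable course, and the rest of the argument checks out, including the fact that you treat item 9 directly (rather than by transfer through the sandwich inequality, which would inflate the constant $s$ to $ms$ for $\bar{d}^m$).
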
    
 
 Let $\preceq $ be a (partial) order on a distance space $(X, d)$.  A sequence $\{x_n\}_{n \in \mathbb N}$ is called:
\begin{itemize}
\item {\it non-decreasing} if $x_n \preceq x_{n+1}$ for each $n \in \mathbb N$;
\item {\it non-increasing} if $x_n \succeq  x_{n+1}$ for each $n \in \mathbb N$;
\item {\it monotone} if it is either non-decreasing or non-increasing.
\end{itemize}

If $(X, d, \preceq )$ is an ordered distance space and $g: X \rightarrow X$ is a mapping, then $(X, d, \preceq )$  is said to have
the sequential $g$-monotone property \cite{Bha, R2} if it verifies: 

i) if $\{x_n\}_{n \in \mathbb N}$ is a non-decreasing sequence and $\lim\limits_{n\rightarrow \infty} d(x,x_n)$ = $0$, 
then $g(x_n) \preceq g(x)$  for all $n \in \mathbb N$;

ii)  if $\{x_n\}_{n \in \mathbb N}$ is a  non-increasing sequence and $\lim\limits_{n\rightarrow \infty }d(x,x_n)$ = $0$, 
then $g(x_n) \preceq g(x)$  for all $n \in \mathbb N$.

An ordered distance space $(X, d, \preceq )$ is called {\it monotonically complete} if any monotone Cauchy sequence 
 in $X$ converges to some point   in $X$.

Fix $m \in \mathbb N$ and a subset $L \subseteq \{1, 2, ..., m\}$. Like in \cite{R1, R2, R3}, we introduce on $X$ the ordering $\preceq _L$:
$ (x_1,..., x_m) \preceq _L (y_1,...,y_m)$ iff $x_i \preceq y_i$ for $i \in L$ and $y_j \preceq x_j$ for $j \notin L$.

By construction, $(X^m, d^m, \preceq _L)$ and   $(X^m, \bar{d^m}, \preceq _L)$ are ordered distance spaces.

If $L \subseteq \{1, 2, ..., m\}$ and $M$ = $\{1, 2, ..., m\}\setminus L$, then $x \preceq _L y$ if and only if $y \preceq _M x$
for $x, y \in X^m$. Hence $\preceq _M$ is the dual (inverse) order of the order $\preceq _L$.

 \begin{prop}\label{P2.2}    Let $(X, d, \preceq )$ be a  monotonically complete distance space. Then
  $(X^m, d^m, \preceq _L)$ and   $(X^m, \bar{d^m}, \preceq _L)$ are ordered  monotonically   complete distance spaces, too.
\end{prop}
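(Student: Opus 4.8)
The plan is to reduce monotone completeness of the product space to monotone completeness of each coordinate, exploiting that the order $\preceq_L$ is defined coordinatewise. First I would recall from the material preceding the statement that $(X^m,d^m,\preceq_L)$ and $(X^m,\bar d^m,\preceq_L)$ are already ordered distance spaces, so only the monotone completeness needs to be verified; and that by Proposition~\ref{P2.1}(10) the distances $d^m$ and $\bar d^m$ have the same Cauchy sequences and the same convergent sequences, so it suffices to argue for $d^m$ and then transfer the conclusion verbatim to $\bar d^m$ (alternatively, one reads it off directly from $\bar d^m(\mathbf x,\mathbf x_n)=\sum_{i\le m} d(x_i,x_{n,i})$).

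Next, let $\{\mathbf x_n\}_{n\in\Nb}$ with $\mathbf x_n=(x_{n,1},\dots,x_{n,m})$ be a monotone Cauchy sequence in $(X^m,d^m,\preceq_L)$; say it is non-decreasing, the non-increasing case being entirely analogous. Unwinding the definition of $\preceq_L$, the relation $\mathbf x_n\preceq_L\mathbf x_{n+1}$ says $x_{n,i}\preceq x_{n+1,i}$ for $i\in L$ and $x_{n+1,j}\preceq x_{n,j}$ for $j\notin L$; hence for every fixed $i\le m$ the coordinate sequence $\{x_{n,i}\}_n$ is monotone in $(X,\preceq)$ — non-decreasing if $i\in L$, non-increasing if $i\notin L$. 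Moreover, since $d(x_{n,i},x_{k,i})\le d^m(\mathbf x_n,\mathbf x_k)$ for all $n,k$, each $\{x_{n,i}\}_n$ is Cauchy in $(X,d)$. Thus every coordinate sequence is a monotone Cauchy sequence, and monotone completeness of $(X,d,\preceq)$ yields points $x_i\in X$ with $\lim_{n\to\infty} d(x_i,x_{n,i})=0$ for each $i\le m$.

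Finally, I would set $\mathbf x=(x_1,\dots,x_m)$ and observe that $d^m(\mathbf x,\mathbf x_n)=\sup\{d(x_i,x_{n,i}):i\le m\}$ is the supremum of finitely many sequences each tending to $0$, hence tends to $0$; so $\mathbf x_n\to\mathbf x$ in $(X^m,d^m)$, and the same convergence holds in $(X^m,\bar d^m)$. I do not expect a genuine obstacle here: the only points requiring mild care are that $d$ need not be symmetric — so ``Cauchy'' and ``convergent'' must be read with the asymmetric conventions fixed in the Preliminaries, but those are inherited coordinatewise without any change — and that the passage from the coordinates back to the product uses only the finiteness of the index set $\{1,\dots,m\}$.
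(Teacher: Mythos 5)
Your argument is correct: the coordinatewise reduction (monotone Cauchy sequences in $(X^m,d^m,\preceq_L)$ have monotone Cauchy coordinate sequences, whose limits assemble into a limit in either product distance) is precisely the routine verification that the paper dispenses with by stating ``It is obvious.'' Nothing is missing, and your care with the asymmetric conventions for Cauchy and convergent sequences is appropriate.
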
    

\begin{proof}  It is obvious.
 \end{proof}

\medskip

 \section{Multiple fixed point principles for monotone type operators}

Fix $m \in \mathbb N$. Denote by $\lambda $ = $(\lambda _1,... , \lambda _m)$ a collection of mappings
$\{\lambda _i: \{1, 2, ..., m\}$ $\longrightarrow \{1, 2, ..., m\}: 1\leq i \leq  m\}$.

Let $(X, d)$ be a distance space and $F: X^m \longrightarrow X$ be an operator. The operator $F$ and the
mappings $\lambda $ generate the operator $\lambda F: X^m \longrightarrow X^m$, where 
$$\lambda F(x_1,....,x_m)
= (y_1,...,y_m) \textnormal{ and } y_i= F(x_{\lambda _i(1)}, ..., x_{\lambda _i(m)}),
$$
 for each point $(x_1, ..., x_m) \in X^m$
and any index $i \in \{1, 2, ..., m\}$. 

A point $a$ = $(a_1, ..., a_m) \in X^m$ is called a $\lambda ${\it -multiple fixed point} of the operator $F$ if $a$ = $\lambda F(a)$,
i.e., $a_i$ = $F(a_{\lambda _i(1)}, ..., a_{\lambda _i(m)})$ for any  $i \in \{1, 2, ..., m\}$.

 Let $(X, d, \preceq )$ be a (partially) ordered distance space,  $m \in \mathbb N$, $L \subseteq \{1, 2,..., m\}$
 and $F: X^m \longrightarrow X$ be an operator. In this context, we consider the following sets of assumptions that include a symmetric type contractive condition, similar to the symmetric contraction introduced and used by the second author in \cite{Ber11}.
 
 {\bf Conditions $\Omega _1$}: \\ 
 \indent 1.  $(X,\preceq )$ is a lattice;
 
 2.  If $x, y, z\in X$ and $x \preceq  y \preceq  z$, then $d(x,y) + d(y, x) \leq d(x,z) + d(z, x)$;

3. If $x, y \in X_m$, $x \not= y$ and $x \preceq _Ly$, then $\lambda F(x) \preceq _L \lambda F(y)$
 and $d^m(\lambda F(x), \lambda F(y)) $ + $d^m(\lambda F(y), \lambda F(x)) $ $< d^m(x,y)$ + $d^m(y,x)$.
 
 {\bf Conditions $\Omega _2$}:\\
\indent 1.  $(X,\preceq )$ is a lattice;
 
 2. If $x, y, z\in X$ and $x \preceq  y \preceq  z$, then $d(x,y) + d(y, x) \leq d(x,z) + d(z, x)$;

3. If $x, y \in X_m$, $x \not= y$ and $x \preceq _L y$, then $\lambda F(y) \preceq _L \lambda F(x)$
 and $d^m(\lambda F(x), \lambda F(y)) $ + $d^m(\lambda F(y), \lambda F(x)) $ $< d^m(x,y)$ + $d^m(y,x)$.

  {\bf Conditions $\Omega _3$}: \\
\indent   1.  $(X,\preceq )$ is a lattice;
 
 2.   If $x, y, z\in X$ and $x \preceq  y \preceq  z$, then $d(x,y) + d(y, x) \leq d(x,z) + d(z, x)$;

3. For any $i \in \{1,2,\dots,m\}$ the mapping $\lambda _i$ is a surjection
or, more generally, $|\cup \{\lambda _i^{-1}(j): 1\leq j \leq m\}| = m$, for each $i \in \{1,2,\dots,m\}$;

4. If $x, y \in X_m$, $x \not= y$ and $x \preceq _Ly$, then $\lambda F(x) \preceq _L \lambda F(y)$
 and $\bar{d}^m(\lambda F(x), \lambda F(y)) $ + $\bar{d}^m(\lambda F(y), \lambda F(x)) $ $< \bar{d}^m(x,y)$ + $\bar{d}^m(y,x)$.
 
  {\bf Conditions $\Omega _4$}:\\
  \indent  1.  $(X,\preceq )$ is a lattice;
 
 2.   If $x, y, z\in X$ and $x \preceq  y \preceq  z$, then $d(x,y) + d(y, x) \leq d(x,z) + d(z, x)$;

3. For any $i \in \{1,2,\dots,m\}$ the mapping $\lambda _i$ is a surjection
or, more generally, $|\cup \{\lambda _i^{-1}(j): 1\leq j \leq m\}| = m$, for each $i \in \{1,2,\dots,m\}$;

4. If $x, y \in X_m$, $x \not= y$ and $x \preceq _Ly$, then $\lambda F(y) \preceq _L \lambda F(x)$
 and $\bar{d}^m(\lambda F(x), \lambda F(y)) $ + $\bar{d}^m(\lambda F(y), \lambda F(x)) $ $< \bar{d}^m(x,y)$ + $\bar{d}^m(y,x)$.

Now we can state concisely the following general and comprehensive multidimensional fixed point result.

\begin{thm}\label{T6.1}     Let  $a \in X^m$ be a multidimensional fixed point of the operator of $F: X^m \longrightarrow X$.
If any of the Conditions $\Omega _i$, $i \in\{1, 2, 3, 4\}$, is satisfied, then the operator $F$ has a unique multidimensional fixed point.
 \end{thm}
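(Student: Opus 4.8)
The plan is to reduce everything to a uniqueness statement for one self-map. A $\lambda$-multiple fixed point of $F$ is, by definition, a fixed point of the associated operator $G:=\lambda F\colon X^m\to X^m$, so the hypothesis furnishes a fixed point $a=G(a)$ and the task is to show $G$ has no other fixed point. I would write $\rho$ for $d^m$ in the cases $\Omega _1,\Omega _2$ and for $\bar{d}^m$ in the cases $\Omega _3,\Omega _4$, and set $D(u,v):=\rho(u,v)+\rho(v,u)$ for $u,v\in X^m$. Since $(X^m,\rho)$ is a distance space (the remark preceding Proposition \ref{P2.1}), axiom ($ii_m$) gives $D(u,v)=0$ exactly when $u=v$, hence $D(u,v)>0$ whenever $u\neq v$; and the last item of each $\Omega _i$ says precisely that $G$ is $\preceq _L$-monotone and strictly decreases $D$ on $\preceq _L$-comparable pairs of distinct points.

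The preparatory step is to transport the order hypotheses $\Omega _i.1$--$\Omega _i.2$ up to $(X^m,\preceq _L)$. Since $(X,\preceq )$ is a lattice, $(X^m,\preceq _L)$ is a lattice, with join given coordinatewise by $(x\vee_L y)_i=x_i\vee y_i$ for $i\in L$ and $(x\vee_L y)_j=x_j\wedge y_j$ for $j\notin L$ (and dually for $\wedge_L$). Likewise, reading $\Omega _i.2$ as order-convexity of the symmetrized distance $D$ along $\preceq $-chains (so that for $p\preceq q\preceq r$ in $X$ one has both $D(p,q)\le D(p,r)$ and $D(q,r)\le D(p,r)$), one lifts it to: if $u\preceq _L v\preceq _L w$ in $X^m$ then $D(u,v)\le D(u,w)$. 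This is checked coordinatewise, using $\Omega _i.2$ verbatim along $u_i\preceq v_i\preceq w_i$ for $i\in L$ and its companion form along the reversed chains $w_j\preceq v_j\preceq u_j$ for $j\notin L$; for $\bar{d}^m$ it is the sum of the coordinatewise inequalities, and it is here that the surjectivity hypothesis $\Omega _3.3=\Omega _4.3$ on the maps $\lambda _i$ is used. I expect this lifting — in particular getting the inequality in the correct direction on the coordinates outside $L$, where $\preceq _L$ reverses $\preceq $ — to be the main technical obstacle.

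With these tools in place, suppose $a,b$ are fixed points of $G$ with $a\neq b$. If they are $\preceq _L$-comparable, then $\Omega _i.3$ applied to $\{a,b\}$ gives $D(a,b)=D(G(a),G(b))<D(a,b)$, which is absurd. If they are incomparable, put $c:=a\vee_L b$; then $a\preceq _L c$ with $a\neq c$ (otherwise $b\preceq _L a$). By $\preceq _L$-monotonicity, $G(c)\succeq _L G(a)=a$ and $G(c)\succeq _L G(b)=b$, so $G(c)$ is an upper bound of $\{a,b\}$, whence $G(c)\succeq _L a\vee_L b=c$. Thus $a\preceq _L c\preceq _L G(c)$, so the lifted $\Omega _i.2$ yields $D(a,c)\le D(a,G(c))$, while $\Omega _i.3$ applied to the comparable pair $\{a,c\}$ yields $D(a,G(c))=D(G(a),G(c))<D(a,c)$ — a contradiction. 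Hence $G$, and therefore $F$, has a unique multidimensional fixed point under $\Omega _1$, and under $\Omega _3$ by the identical argument carried out with $\bar{d}^m$.

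Finally, the cases $\Omega _2$ and $\Omega _4$, in which $G$ reverses $\preceq _L$ on comparable distinct points, I would reduce to the previous ones by passing to $G^2$: it is $\preceq _L$-monotone, it inherits the unchanged items $\Omega _i.1$--$\Omega _i.2$ (and, in the $\bar{d}^m$ case, $\Omega _i.3$ on the $\lambda _i$), and for comparable $u\neq v$ one has $D(G^2u,G^2v)\le D(Gu,Gv)<D(u,v)$ — the subcase $Gu=Gv$ being trivial since then $D(G^2u,G^2v)=0<D(u,v)$ — so $G^2$ satisfies $\Omega _1$ (resp. $\Omega _3$). Since $a$ is a fixed point of $G^2$, the previous step gives $G^2$ a unique fixed point $p$; as $G(p)$ is then also a fixed point of $G^2$, we get $G(p)=p$, so $G$ has exactly one fixed point, namely $a$. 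This completes the argument, the only delicate ingredient being the coordinatewise transfer of Conditions $\Omega _i.2$ to $\preceq _L$ described in the second paragraph.
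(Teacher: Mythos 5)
Your argument is correct and its core coincides with the paper's own proof: the same reduction to the self-map $G=\lambda F$ of $(X^m,\preceq_L)$, the same symmetrized quantity $\rho(x,y)+\rho(y,x)$ with $\rho=d^m$ or $\bar{d}^m$, the same lifting of the lattice hypothesis and of item 2 of the Conditions $\Omega_i$ to $(X^m,\preceq_L)$, and an identical disposal of a comparable pair of distinct fixed points. Where you genuinely diverge is in the incomparable case and in the order-reversing conditions: the paper handles all four condition sets in one pass by taking $c=\max\{a,b\}$ and using the double iterate $\lambda F(\lambda F(c))$, which lies above both $a$ and $b$ whether $\lambda F$ preserves or reverses $\preceq_L$, and then contracts twice along the chain $a\preceq_L c\preceq_L \lambda F(\lambda F(c))$; you instead settle $\Omega_1,\Omega_3$ with the single image $G(c)\succeq_L c$ of the join and reduce $\Omega_2,\Omega_4$ to them by verifying that $G^2$ inherits monotonicity and the strict symmetrized contraction and that uniqueness for $G^2$ forces uniqueness for $G$. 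Both routes work; the paper's is shorter, yours isolates more transparently why the antitone case costs nothing extra. Three small remarks. First, in your isotone incomparable case you also need $b\neq c$ (if $b=c$ then $a\preceq_L b$), since item 3 of $\Omega_1$ gives monotonicity only for distinct points; this is as immediate as your check that $a\neq c$, but should be said. Second, your attribution of the $\bar{d}^m$-lift of item 2 to the surjectivity hypothesis in $\Omega_3$ and $\Omega_4$ is off: that lift is just the sum of the coordinatewise inequalities and involves neither $F$ nor the maps $\lambda_i$; in fact the surjectivity hypothesis plays no role in the uniqueness argument, and the paper's proof never invokes it either. Third, your explicit appeal to the ``companion'' form of item 2 along the reversed chains in the coordinates outside $L$ is precisely the reading the paper adopts tacitly when it asserts the lifted inequality for $\preceq_L$-chains, so on that delicate point you are on the same footing as the paper, and being explicit about it is a merit rather than a defect of your write-up.
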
    

\begin{proof} Obviously, $(X^m, \preceq _L)$ is a lattice, too.
 Let $\rho $ = $d^m$, for $i \in \{1, 2\}$, and  $\rho $ = $\bar{d}^m$, for $i \in \{3, 4\}$.
Then for $x, y, z\in X^m$ and $x \preceq _L y \preceq _L z$ we have $\rho (x,y)+ \rho (y,x)\leq \rho (x,z) + \rho (z,x)$.
Assume that $b\in X^m$ is a  multidimensional fixed point of the operator $F$ with $b \not= a$.

{\bf Case 1}. The points $a$ and $b$ are comparable.

Assume that $a \preceq _L b$. Then $\rho (a,b)$ + $\rho (b,a)$ = $\rho (\lambda F(a),\lambda F(b))$ + $\rho (\lambda F(b),\lambda F(a))$ 
$< \rho (a,b)$ + $\rho (b,a)$, a contradiction.

{\bf Case 2}. The points $a$ and $b$ are not comparable.

Fix $c = \max \{a, b\}  \in X_n$. We put $d$ = $\lambda F(\lambda F(c))$. By construction, $a \preceq _L d$ and $b \preceq _L d$.
Hence, $c \preceq _L d$ and $\rho (a,c) \leq \rho (a,d)$,  $\rho (b,c) \leq \rho (b,d)$. 
Therefore $\rho (a,c) + \rho (c,a)$ $\leq \rho (a,d) + \rho (d(a)$.
  By virtue of the conditions $\Omega _i$, we then have  $ \rho (a,d) + \rho (d,a)$  $< \rho (a,c) + \rho (c,a)$, a contradiction.
\end{proof}
Now,  according to \cite{Meir}, consider the following two classes of Meir-Keeler type assumptions.
 
 {\bf Conditions $MK_1$}: \\
 \indent1. For any  two points $x, y \in X$ there exists an upper bound and a lower bound;
 
 2 (Meir-Keeler monotone contraction condition). There exists a function $\delta : (0, +\infty ) \longrightarrow (0,+\infty )$ such that from $r > 0$, $x, y \in X$, $d(x,y) <  r + \delta (r)$
 and $x \preceq  y $ it follows that $d(x,y)  < r$;

3. If $x, y \in X_m$, $x \preceq _Ly$, then $\lambda F(x) \preceq _L \lambda F(y)$.
 
 {\bf Conditions $MK_2$}: \\
 \indent 1. For any  two points $x, y \in X$ there exist an upper bound and a lower bound;
 
 2 (Meir-Keeler monotone contraction condition). There exists a function $\delta : (0, +\infty ) \longrightarrow (0,+\infty )$ such that
from $r > 0$, $x, y \in X$, $d(x,y) <  r + \delta (r)$ and $x \preceq  y $ it follows that $d(x,y)  < r$;

3. If $x, y \in X_m$, $x \preceq _Ly$, then $\lambda F(y) \preceq _L \lambda F(x)$.

\begin{thm}\label{T6.2}     Let $(X, d)$ be an $H$-distance space and let $a \in X^m$ be a multidimensional 
fixed point of the operator of $F: X^m \rightarrow X$.
Then in any of the Conditions $MK_i$, $i\in \{1, 2\}$, the operator $F$ has a unique multidimensional fixed point.
 \end{thm}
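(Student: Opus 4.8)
The plan is to reduce the statement, exactly as in the proof of Theorem~\ref{T6.1}, to a one--dimensional uniqueness question for the associated operator $\lambda F\colon X^m\to X^m$ equipped with the distance $\rho=d^m$ and the order $\preceq_L$. By Proposition~\ref{P2.1}, $(X^m,\rho)$ is again an $H$-distance space; by part $3$ of $MK_i$ the map $\lambda F$ is $\preceq_L$-monotone (isotone under $MK_1$, antitone under $MK_2$); and by part $2$ the Meir--Keeler contraction condition passes to $\lambda F$ on comparable pairs, i.e.\ for each $r>0$ there is $\delta(r)>0$ such that $x\preceq_L y$ together with $\rho(x,y)+\rho(y,x)<r+\delta(r)$ force $\rho(\lambda F(x),\lambda F(y))+\rho(\lambda F(y),\lambda F(x))<r$. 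Since a multidimensional fixed point of $F$ is exactly a fixed point of $\lambda F$ and one such point $a$ is given, only uniqueness must be proved --- note that completeness plays no role. I would write $\mu(x,y)=\rho(x,y)+\rho(y,x)$, which is $>0$ whenever $x\ne y$ by axiom $(ii_m)$, and assume for contradiction that $b=\lambda F(b)$ with $b\ne a$.

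First I would dispose of the case where $a$ and $b$ are $\preceq_L$-comparable: applying the Meir--Keeler condition to this pair with $r=\mu(a,b)>0$ gives $\mu(a,b)=\mu(\lambda F(a),\lambda F(b))<\mu(a,b)$, a contradiction; because $\mu$ is symmetric in its two arguments, the direction of the comparison and the antitone case $MK_2$ need no separate treatment.

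The substantive case is that $a$ and $b$ are incomparable. Here I would use part $1$ of $MK_i$ coordinatewise --- picking in each coordinate $i\in L$ an upper bound, and in each coordinate $j\notin L$ a lower bound, of the corresponding entries of $a$ and $b$ --- to produce $c\in X^m$ with $a\preceq_L c$ and $b\preceq_L c$, and then iterate, setting $c_n:=(\lambda F)^n(c)$. Using the monotonicity of $\lambda F$ with $\lambda F(a)=a$ and $\lambda F(b)=b$, one checks that $a$ and $b$ stay $\preceq_L$-comparable with every $c_n$: under $MK_1$ one gets $a\preceq_L c_n$ and $b\preceq_L c_n$ for all $n$, while under $MK_2$ the relations swap direction with the parity of $n$ but comparability is never lost. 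With $t_n:=\mu(a,c_n)$, the Meir--Keeler condition applied to the comparable pair $\{a,c_n\}$ with $r=t_n$ (when $t_n>0$) yields $t_{n+1}<t_n$, so $(t_n)$ is non-increasing with limit $t\ge 0$; if $t>0$, choosing $n_0$ with $t_{n_0}<t+\delta(t)$ and applying the condition with $r=t$ gives $t_{n_0+1}<t$, contradicting $t_{n_0+1}\ge t$. Hence $t=0$, so $\rho(a,c_n)\to 0$, that is, $c_n\to a$ in $(X^m,\rho)$; the same computation gives $c_n\to b$. Since $(X^m,d^m)$ is an $H$-distance space, a convergent sequence has a unique limit by Remark~\ref{rm1.2}, whence $a=b$ --- a contradiction. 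Therefore $F$ has a unique multidimensional fixed point.

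I expect the incomparable case to be the only real obstacle. Since $a$ and $b$ cannot be compared directly, one must pass to a common bound $c$, run the Meir--Keeler orbit from $c$, and then invoke the $H$-property to force the single sequence $(c_n)$ to have $a$ and $b$ as a common limit; the extra care needed in the antitone situation $MK_2$ is to verify that $a$ and $b$ remain comparable with all iterates $c_n$ despite the alternation, so that the Meir--Keeler inequality is legitimately applicable at every step.
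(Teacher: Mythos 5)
Your argument is correct and follows the paper's overall strategy --- pass to $\lambda F$ on $(X^m,d^m,\preceq_L)$, note that only uniqueness is at stake, split into the comparable and incomparable cases, and conclude with the $H$-property (unique limits, Remark~\ref{rm1.2}, lifted to $X^m$ by Proposition~\ref{P2.1}) --- but your handling of the incomparable case is genuinely different from the paper's. The paper sets $r=\inf\{\max\{\rho(a,c),\rho(b,c)\}:\ c\in X^m,\ a\preceq_L c,\ b\preceq_L c\}$, rules out $r>0$ by one double application $e=\lambda F(\lambda F(c))$ (the squaring restores the order direction under $MK_2$), and then chooses, for each $n$, a \emph{fresh} common upper bound $c_n$ with $\max\{\rho(a,c_n),\rho(b,c_n)\}<2^{-n}$; you instead fix a single common upper bound and iterate it, $c_n=(\lambda F)^n(c)$, proving $\mu(a,c_n)\to 0$ and $\mu(b,c_n)\to 0$ by the standard Meir--Keeler monotone-limit argument ($t_n$ non-increasing, then $t=0$). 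Your route is more constructive (one orbit, no infimum over an auxiliary family of bounds) and makes explicit the parity bookkeeping for the antitone case that the paper compresses into the double step; the paper's route is shorter and uses the contraction estimate only once. Two caveats, both of which apply equally to the paper's own proof: condition 2 of $MK_i$ as printed does not mention $F$ at all, so both arguments tacitly read it as a Meir--Keeler contraction condition for $\lambda F$ on $\preceq_L$-comparable pairs (in the spirit of Theorems~\ref{T6.3}--\ref{T6.4}); and your symmetrization $\mu(x,y)=\rho(x,y)+\rho(y,x)$ is a further unstated strengthening of that reading, which you need because under $MK_2$ the orientation of the comparable pair alternates while a general distance $d$ need not be symmetric --- the paper, working with $\rho=d^m$ alone, silently glosses over exactly the same orientation issue in its estimate $\max\{\rho(a,e),\rho(b,e)\}<r$.
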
    

\begin{proof} Obviously, in $(X^m, \preceq _L)$, for any  two points $x, y \in X^m$, there exist an upper bound and a lower bound.
 Let $\rho $ = $d^m$. In this case for any  two points $x, y \in X^m$, from the condition 
  $\rho (x,y) <  r + \delta (r)$ and $x \preceq  y $, it follows that $\rho (x,y)  < r$.
  
Assume that $b\in X^m$ is a  multidimensional fixed point of the operator of $F$ and that $b \not= a$.

{\bf Case 1}. The points $a$ and $b$ are comparable.

Assume that $a \preceq L b$ and $\rho (a,b) = r > 0$. Since $\rho (a, b) < r + \delta (r)$, we have $r$ = $\rho (a,b)$ =
 $\rho (\lambda F(a),\lambda F(b)) < r$, a contradiction.

{\bf Case 2}. The points $a$ and $b$ are not comparable.
We put $r$ = $\inf\{max\{\rho (a,c), \rho (b,c)\}: c \in X^m, a \preceq _L c, b \preceq _L c\}$. 
We claim that $ r$ = 0. Assume that $r > 0$. Then $\delta (r) > 0$ and there exists $c$ such that $\max\{\rho (a,c), \rho (b,c)\} < r + \delta (r)$,
$a \preceq _L c$, $b \preceq _L c$. We put $e$ = $\lambda F(\lambda F(c))$. Then $a \preceq _L e$ and $b \preceq _L e$. Since $\lambda F(\lambda F(a))$
= $a$ and $\lambda F(\lambda F(b))$ = $b$, we have $\max\{\rho (a,e), \rho (b,e)\} < r$, a contradiction. Thus $r$ = $0$.
For each $n \in \mathbb N$ there exists a point $c_n \in X^n$ such that  $a \preceq _L c_n$, $b \preceq _L c_n$ and
 $max\{\rho (a,c_n), \rho (b,c_n)\} < 2^{-n}$. We can construct a sequence $\{c_n: n \in \mathbb N\}$ for which
 $a$ = $\lim_{n \rightarrow \infty } c_n$ and  $b$ = $\lim_{n \rightarrow \infty } c_n$, a contradiction.
\end{proof}
In the particular case $m = 2$, the following theorems were proved in \cite{BP}. Their proofs in the general case are similar and we omit them.
 
\begin{thm}\label{T6.3}     Let $(X, d, \preceq )$ be an ordered metric space, $m \in \mathbb N$, $F: X^m \rightarrow X$ be an operator. Suppose that:

a) there exists a function $\delta : (0, +\infty ) \longrightarrow (0,+\infty )$ such that from $r > 0$, $x, y \in X^m$, $d^m(x,y) <  r + \delta (r)$
 and $x \preceq  y $ it follows that $d^m(\lambda F(x),\lambda F(y))  < r$;
 
 b)  for any  two points $x, y \in X$ there exists an upper bound and a 
lower bound.  

Suppose also that one of the following sets of conditions is satisfied:
 
 1.  $(X, d, \preceq )$ is monotonically complete; from $x, y \in X^m$ and $x \preceq _L y$ it follows that $\lambda F(x) \preceq _L \lambda F(y)$; 
 there exists $a \in X^m$ such that  $a \preceq _L \lambda F(a)$.

2.  $(X, d, \preceq )$ is complete; from $x, y \in X^m$ and $x \preceq _L y$ it follows that $\lambda F(y) \preceq L \lambda F(x)$; 
 there exists $a \in X^m$ such that  $a \preceq _L \lambda F(a)$ or $ F(a)  \preceq _L a$.
 
 Then there exists a unique multidimensional fixed point of the operator of $F$.
 \end{thm}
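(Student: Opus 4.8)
The plan is to realise every $\lambda$-multiple fixed point of $F$ as a genuine fixed point of the associated self-map $G:=\lambda F$ of $X^m$, and then to run a monotone Picard iteration driven by the Meir--Keeler condition (a). First I would collect the standing facts: since $(X,d)$ is a metric space, $(X^m,d^m)$ is a metric space by Proposition~\ref{P2.1}, so convergent sequences have unique limits; monotone completeness (resp.\ completeness) of $(X,d,\preceq)$ passes to $(X^m,d^m,\preceq_L)$ by Proposition~\ref{P2.2}; and hypothesis (b) lifts to $X^m$, any two points having a $\preceq_L$-upper bound and a $\preceq_L$-lower bound (take coordinatewise upper bounds on the indices in $L$ and lower bounds on the indices outside $L$). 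Reading $\preceq$ in (a) as the order $\preceq_L$ on $X^m$, hypothesis (a) is exactly the statement that $\rho:=d^m$ and $G$ satisfy a Meir--Keeler contraction on $\preceq_L$-comparable pairs; taking $r=\rho(x,y)$ it yields in particular $\rho(Gx,Gy)<\rho(x,y)$ whenever $x\preceq_L y$, $x\ne y$. We may also assume $\delta(r)\le r$ for all $r>0$, since replacing $\delta$ by $\min(\delta(r),r)$ only strengthens (a).

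For existence under the first set of conditions, $G$ is $\preceq_L$-nondecreasing and some $a$ satisfies $a\preceq_L G(a)$, so the iterates $a_n:=G^n(a)$ form a $\preceq_L$-chain $a_0\preceq_L a_1\preceq_L\cdots$ (if some $a_n=a_{n+1}$ we are done). Put $c_n:=\rho(a_n,a_{n+1})$. Applying (a) with $r=c_n$ to the comparable pair $a_n\preceq_L a_{n+1}$ gives $c_{n+1}<c_n$, so $c_n\downarrow c\ge 0$; if $c>0$, picking $N$ with $c_N<c+\delta(c)$ and applying (a) with $r=c$ gives the contradiction $c_{N+1}<c$, so $c_n\to 0$. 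Next, $\{a_n\}$ is Cauchy: fix $\varepsilon>0$, put $\delta_0:=\delta(\varepsilon)\le\varepsilon$, choose $P$ with $c_n<\delta_0$ for $n\ge P$, and show by induction on $q\ge p\ge P$ that $\rho(a_p,a_q)<\varepsilon+\delta_0$ --- the step $q\to q+1$ uses $a_p\preceq_L a_q$ together with the inductive bound $\rho(a_p,a_q)<\varepsilon+\delta(\varepsilon)$ to get $\rho(a_{p+1},a_{q+1})<\varepsilon$ from (a), whence $\rho(a_p,a_{q+1})\le c_p+\rho(a_{p+1},a_{q+1})<\delta_0+\varepsilon$; thus $\rho(a_p,a_q)<2\varepsilon$ for all $p,q\ge P$. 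Being monotone and Cauchy, $\{a_n\}$ converges to some $a^*\in X^m$ by monotone completeness of $(X^m,d^m,\preceq_L)$. Finally, to see $G(a^*)=a^*$: knowing $a_n\preceq_L a^*$ for all $n$, hypothesis (a) with $r=\rho(a_n,a^*)$ gives $\rho(a_{n+1},G(a^*))=\rho(G(a_n),G(a^*))<\rho(a_n,a^*)\to0$, and since $a_{n+1}\to a^*$ with unique limits, $G(a^*)=a^*$. The second set of conditions is treated in the same spirit, now with $G$ $\preceq_L$-nonincreasing (so $G^2$ is nondecreasing) and full completeness absorbing the non-monotone Picard orbit, exactly as in the case $m=2$ of \cite{BP}.

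For uniqueness one argues as in the proof of Theorem~\ref{T6.2}. Let $a^*\ne b^*$ be fixed points of $G$. If $a^*\preceq_L b^*$, applying (a) with $r=\rho(a^*,b^*)>0$ gives $\rho(a^*,b^*)=\rho(Ga^*,Gb^*)<\rho(a^*,b^*)$, absurd. If they are incomparable, set $r:=\inf\{\max(\rho(a^*,c),\rho(b^*,c)):c\in X^m,\ a^*\preceq_L c,\ b^*\preceq_L c\}$, the index set being nonempty by the lifted (b). If $r>0$, pick such $c$ with $\max(\rho(a^*,c),\rho(b^*,c))<r+\delta(r)$ and set $e:=G^2(c)$; since $G^2$ is $\preceq_L$-nondecreasing (in both condition sets) and $G^2(a^*)=a^*$, $G^2(b^*)=b^*$, one has $a^*\preceq_L e$, $b^*\preceq_L e$, and two successive uses of (a) (comparability being preserved, with possibly reversed direction, by $G$) give $\max(\rho(a^*,e),\rho(b^*,e))<r$, contradicting the definition of $r$. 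Hence $r=0$; choosing $c_n$ with $a^*\preceq_L c_n$, $b^*\preceq_L c_n$ and $\max(\rho(a^*,c_n),\rho(b^*,c_n))\to0$ forces $c_n\to a^*$ and $c_n\to b^*$, so $a^*=b^*$ by uniqueness of limits in $(X^m,d^m)$ --- a contradiction.

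The hard part is the passage to the limit, i.e.\ the last step $G(a^*)=a^*$. The Meir--Keeler bookkeeping turning $c_n\to0$ into Cauchyness of $\{a_n\}$ is routine once one exploits the comparability of all pairs along the chain and the freedom to shrink $\delta$; but concluding that the monotone limit is a fixed point rests on the fact that the nondecreasing convergent iterates satisfy $a_n\preceq_L a^*$ (a ``limit is an upper bound''/order-continuity property of the ambient ordered metric space), which is precisely the ingredient supplied by the ordered-space structure in the $m=2$ treatment of \cite{BP}; without it the construction only delivers a candidate limit. For the second set of conditions the analogous subtlety, together with the need to make the non-monotone orbit converge, is exactly why full completeness rather than mere monotone completeness is assumed there.
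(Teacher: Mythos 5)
The paper offers no written proof of Theorem~\ref{T6.3}: the authors only state that the case $m=2$ is proved in \cite{BP} and that ``their proofs in the general case are similar''. Your reconstruction follows exactly that intended route --- pass to $G=\lambda F$ on $(X^m,d^m,\preceq_L)$, lift (a), (b) and (monotone) completeness via Propositions~\ref{P2.1} and~\ref{P2.2}, run the Meir--Keeler monotone Picard iteration, and prove uniqueness by the comparable/incomparable dichotomy of Theorem~\ref{T6.2} --- and for condition set~1 your $c_n\to 0$, Cauchyness and uniqueness arguments are sound. The point you flag yourself is real: the step $G(a^*)=a^*$ needs either continuity of $F$ or the regularity property that a non-decreasing convergent sequence satisfies $a_n\preceq_L a^*$ (the sequential monotone property recalled in Section~3); an assumption of this kind is explicit in \cite{BP} but is absent from the statement of Theorem~\ref{T6.3}, so this is best read as an omission in the statement rather than a defect of your argument --- but as written, neither your proof nor the theorem's hypotheses deliver that step, and it should be added (or the Meir--Keeler condition extended beyond comparable pairs).

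The genuine gap in your proposal is condition set~2. When $\lambda F$ reverses $\preceq_L$, the orbit $a_n=G^n(a)$ is not a chain: only consecutive terms are comparable, with alternating direction. Your Cauchy induction applies (a) to the pair $(a_p,a_q)$ for arbitrary $p\le q$ and therefore does not transfer; moreover \cite{BP} treats mixed monotone coupled maps, which under the reduction correspond to the order-\emph{preserving} case, so ``exactly as in the case $m=2$ of \cite{BP}'' has no target here. One must argue differently --- e.g.\ work with the nondecreasing map $G^2$ on the even and odd subsequences and use hypothesis (b) to manufacture common comparable majorants, which is precisely where full (rather than monotone) completeness and the upper/lower bound condition are consumed. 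As it stands, your second half is a statement of intent rather than a proof, and this is the part that would need to be written out for the theorem to be considered established.
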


\begin{thm}\label{T6.4}     Let $(X, d, \preceq )$ be an ordered metric space, $m \in \mathbb N$, $F: X^m \rightarrow X$ be an operator.  Suppose that:

a) there exists a function $\delta : (0, +\infty ) \longrightarrow (0,+\infty )$ such that from $r > 0$, $x, y \in X^m$, $\bar{d}^m(x,y) <  r + \delta (r)$
 and $x \preceq  y $ it follows that $\bar{d}^m(\lambda F(x),\lambda F(y))  < r$;
 
 b) for any  two points $x, y \in X$, there exist an upper bound and a 
lower bound; for any $i \in \{1,2,\dots,m\}$, the mapping $\lambda _i$ is a surjection
or, more generally, $|\cup \{\lambda _i^{-1}(j): 1\leq j \leq m\}| = m$, for each $i \in \{1,2,\dots,m\}$.  

Suppose also that one of the following sets of conditions is satisfied:
 
 1.  $(X, d, \preceq )$ is monotonically complete; from $x, y \in X^m$ and $x \preceq _L y$ it follows that $\lambda F(x) \preceq _L \lambda F(y)$; 
 there exists $a \in X^m$ such that  $a \preceq _L \lambda F(a)$.

2.  $(X, d, \preceq )$ is complete; from $x, y \in X^m$ and $x \preceq _L y$ it follows that $\lambda F(y) \preceq L \lambda F(x)$; 
 there exists $a \in X^m$ such that  $a \preceq _L \lambda F(a)$ or $ F(a)  \preceq _L a$.
 
 Then there exists a unique multidimensional fixed point of the operator of $F$.
 \end{thm}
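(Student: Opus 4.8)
The plan is to reduce the statement to a one-dimensional Meir-Keeler fixed point theorem for the associated operator $T:=\lambda F$ acting on the ordered metric space $(X^m,\rho,\preceq_L)$, where $\rho:=\bar{d}^m$. By Proposition~\ref{P2.1}\,(3) the distance $\rho$ is a genuine metric on $X^m$, so limits are unique (Remark~\ref{rm1.2}) and convergent sequences are Cauchy (Remark~\ref{rm1.1}); by Proposition~\ref{P2.2} the space $(X^m,\rho,\preceq_L)$ is monotonically complete under the hypotheses of item~1, while by Proposition~\ref{P2.1}\,(8) it is complete under those of item~2. A point $a\in X^m$ is a $\lambda$-multiple fixed point of $F$ precisely when $a=T(a)$, so it suffices to prove that $T$ has a unique fixed point. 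Assumption (a) translates verbatim into an ordered Meir-Keeler condition for $T$: if $\rho(x,y)<r+\delta(r)$ and $x\preceq_L y$, then $\rho(Tx,Ty)<r$; taking $r=\rho(x,y)$ one obtains, in particular, $\rho(Tx,Ty)<\rho(x,y)$ whenever $x\preceq_L y$ and $x\neq y$. Assumption (b) yields, coordinatewise and using that in $X$ every pair admits both an upper and a lower bound, a $\preceq_L$-upper bound and a $\preceq_L$-lower bound for every pair of points of $X^m$; the surjectivity (i.e.\ permutation) requirement on the $\lambda_i$ is what makes the sum-distance $\rho=\bar{d}^m$ the distance compatible with $T$, each coordinate being used the correct number of times (cf.\ \cite{ChoB}).

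For \textbf{existence} under the hypotheses of item~1 (where $T$ is $\preceq_L$-increasing), I would start from the point $a$ with $a\preceq_L T(a)$ and form the Picard sequence $a_0=a$, $a_{n+1}=T(a_n)$. Monotonicity of $T$ makes $\{a_n\}$ non-decreasing, hence consecutive terms comparable, so $r_n:=\rho(a_n,a_{n+1})$ is non-increasing; either $a_n=a_{n+1}$ for some $n$, and we are done, or $r_n\downarrow r_{\infty}\geq 0$. If $r_{\infty}>0$, picking $n$ with $r_n<r_{\infty}+\delta(r_{\infty})$ and applying (a) to the comparable pair $a_n\preceq_L a_{n+1}$ gives $r_{n+1}<r_{\infty}$, a contradiction; thus $r_n\to 0$. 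The classical Meir-Keeler argument (asymptotic regularity together with the $\delta$-condition, as in \cite{Meir}) then shows $\{a_n\}$ is $\rho$-Cauchy, and monotonic completeness yields $a_n\to a^{*}$ for some $a^{*}\in X^m$. Invoking the monotone-limit behaviour of the space, so that $a_n\preceq_L a^{*}$ for all $n$, the comparable-pair contraction gives $\rho(a_{n+1},Ta^{*})=\rho(Ta_n,Ta^{*})<\rho(a_n,a^{*})\to 0$, whence $a_{n+1}\to Ta^{*}$ and, by uniqueness of limits, $Ta^{*}=a^{*}$. Under the hypotheses of item~2, $T$ is $\preceq_L$-order-reversing, hence $T^2$ is $\preceq_L$-increasing; the same Picard sequence now has consecutive terms comparable on alternating sides of $\preceq_L$, the $r_n$ are still non-increasing with $r_n\to 0$ by the same gap argument, completeness supplies the limit $a^{*}$, and the argument that $a^{*}$ is a fixed point is run along $T^2$.

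For \textbf{uniqueness}, let $a^{*},b^{*}$ be fixed points of $T$ and pick, by (b), a $\preceq_L$-upper bound $c$ of $\{a^{*},b^{*}\}$. Since $T$ (in the setting of item~1) and $T^2$ (in the setting of item~2) is $\preceq_L$-increasing, and $a^{*}\preceq_L c$, $b^{*}\preceq_L c$, each iterate $T^{k}c$ is comparable to $a^{*}$ and to $b^{*}$ (on alternating sides in the second case). Then $k\mapsto\rho(a^{*},T^{k}c)=\rho(Ta^{*},T(T^{k-1}c))$ is non-increasing, and the Meir-Keeler gap argument forces $\rho(a^{*},T^{k}c)\to 0$; likewise $\rho(b^{*},T^{k}c)\to 0$. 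Hence the single sequence $\{T^{k}c\}$ converges simultaneously to $a^{*}$ and to $b^{*}$, and uniqueness of limits in the metric space $(X^m,\rho)$ forces $a^{*}=b^{*}$. For $m=2$ this is exactly the scheme of \cite{BP}, and since all the auxiliary facts about $\bar{d}^m$ and $\preceq_L$ are supplied by Propositions~\ref{P2.1} and \ref{P2.2}, no new phenomenon arises for general $m$.

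The step I expect to be the real obstacle, and the only place where anything beyond bookkeeping is used, is the passage from the limit $a^{*}$ to $Ta^{*}=a^{*}$: it rests on the monotone-limit property of the ambient ordered metric space (equivalently, the sequential $\id$-monotone property from Section~3), which is the order-theoretic substitute for continuity here and is what genuinely powers the existence part (compare \cite{Bha}, \cite{Nie06}, \cite{Nie07}). A secondary technical point is the Meir-Keeler Cauchy estimate, classical but to be carried out with the ordered form of the $\delta$-condition; and under the hypotheses of item~2 one has to track the alternation of $\preceq_L$ along the orbit. Everything else reduces, through Propositions~\ref{P2.1} and \ref{P2.2}, to the one-dimensional case.
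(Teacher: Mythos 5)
The paper gives no separate proof of this theorem --- it states that the $m=2$ case is proved in \cite{BP} and that the general case is analogous --- and your proposal reconstructs exactly that intended argument: reduce to a Meir--Keeler fixed point problem for $T=\lambda F$ on the ordered metric space $(X^m,\bar{d}^m,\preceq_L)$ via Propositions~\ref{P2.1} and \ref{P2.2}, run the Picard/asymptotic-regularity/Cauchy scheme on comparable pairs, and obtain uniqueness from the existence of upper bounds. Your observation that the passage from the Picard limit to an actual fixed point requires the sequential monotone property (or continuity of $F$), which appears among the hypotheses in \cite{BP} but is left implicit in the statement here, correctly pinpoints the one place where the stated hypotheses must be read as including that assumption.
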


\section{Some particular cases and a generic application of multiple fixed points}
If we take concrete values of $m\in \mathbb{N}$ and consider various particular functions  $\lambda $ = $\{\lambda _i: \{1, ..., m\} \rightarrow  \{1, ..., m\}: 1\leq i \leq m\}$ then, most of the concepts of coupled, triple, quadruple,..., multiple fixed point theory existing in literature are  obtained as particular cases of the concept of multiple fixed point considered in \cite{ChoB} and the present paper. 

For example, if $m=2$, $\lambda_1(1)=1$, $\lambda_1(2)=2$; $\lambda_2(1)=2$, $\lambda_2(2)=1$,  we obtain  the incept of coupled fixed point studied in \cite{Bha} and in various subsequent papers. If $m=3$, $\lambda_1(1)=1$, $\lambda_1(2)=2$, $\lambda_1(3)=3$; $\lambda_2(1)=2$, $\lambda_2(2)=1$, $\lambda_2(3)=2$; $\lambda_3(1)=3$, $\lambda_3(2)=2$, $\lambda_3(3)=1$, then the concept of multiple fixed point studied in the present paper reduces to that of triple fixed point, first introduced in \cite{BB1} and intensively studied in many other research works emerging from it.  

We note that, as pointed out in \cite{Sha},  the notion of tripled fixed point due to Berinde and Borcut \cite{BB1} is different from the one defined by Samet and Vetro \cite{SV} for $N = 3$, since in the case of ordered metric spaces in order to keep the mixed monotone property working, it is necessary to take $\lambda_2(3)=2$ and not $\lambda_2(3)=3$.

It is also important to mention here that some cases of multidimensional coincidence points results (that extend multiple fixed point theorems) are not compatible with the mixed monotone property (see \cite{Al}).

For other concepts of multiple fixed points considered in literature the condition " $\lambda _i$ is a surjection, for each $i   \leq m$" is no more valid, see for example \cite{BB1} and the research papers emerging from it, while the second condition, $|\cup \{\lambda _i^{-1}(j): 1\leq j \leq m\}| = m$, for each $i   \leq m$, is satisfied.

Finally, we point out the fact that our approach in \cite{ChoB} and in this paper is based on the idea to obtain general multiple fixed point theorems by reducing this problem to a unidimensional fixed point problem and by simultaneously working in a more general and very reliable setting, i.e., that of distance space. 

Many other related and relevant results could be obtained in the same way, by reducing the multidimensional fixed point problem to many other independent unidimensional fixed point principles, like the ones established in  \cite{Ber13},  \cite{Ber09},  \cite{Ber09a},  \cite{Ber10},  \cite{Ber11a},  \cite{Ber12},  \cite{BC1},  \cite{BP},  \cite{BerP15} etc. 

We end the paper by indicating an interesting  generic application of multiple fixed points in game theory. 

Fix an orderable distance space  $(X, d, \preceq )$ and a positive integer number $m \geq  2$. We put $\mathbb N$ = 
$\{1, 2, ...\}$ and $\mathbb N_m$ = $\{1, 2, ..., m\}$. For $L \subseteq \mathbb N$, we introduce on $X$ 
the ordering $\preceq _L$: $ (x_1,..., x_m) \preceq _L (y_1,...,y_m)$ iff $x_i \preceq y_i$ for $i \in L$ 
and $y_j \preceq x_j$ for $j \notin L$.

Denote by $\Lambda  $ = $(\lambda _1,... , \lambda _m)$ a collection of mappings
$\{\lambda _i: \mathbb N_m$ $\longrightarrow \mathbb N_m: i \leq  m\}$.

Let  $F: X^m \longrightarrow X$ be an operator. The operator $F$ and the
mappings $\Lambda $ generate the operator $\Lambda F: X^m \longrightarrow X^m$, where $\lambda F(x_1,....,x_m)$
= $(y_1,...,y_m)$ and $y_i$ = $F(x_{\lambda _i(1)}, ..., x_{\lambda _i(m)})$ for each point $(x_1, ..., x_m) \in X^m$
and any index $i \leq  m$. 

Assume now that $\mathbb N_m$ is the set of players and $i \in \mathbb N_m$ is the symbol of the $i$th player.
In this case we say that:

- $X$ is the space of the positions (decisions) of the players;

- $d(x, y)$ is the measure of the non-convenience of the position $x$ relatively to the position $y$;

- ordering $\leq $ is the relation of domination of positions;

- a point $x $ = $(x_1, x_2, ..., x_m) \in X^m$ is a selection of positions, where $x_i$ is the position of the
player $i$;

-  the operator $F$ is the operator of correction of the positions. 

Every selection of positions  $x $ = $(x_1, x_2, ..., x_m) \in X^m$ determines the  selection of positions  
  $y $ = $(y_1, y_2, ..., y_m)$ = $\Lambda F(x) \in X^m$. 
  
  For any player $i$ the number $d(x_i,y_i)$ is the 
measure   of the non-convenience of the position $x_i$ relatively to the position $y_i$ for the $i^{th}$ player.

One considers that the selection of the positions  $x $ = $(x_1, x_2, ..., x_m) \in X^m$ is optimal if $d(x_i,y_i)$ is minimal for each $i$.

In particular, if $\Lambda F(x) $ = $x$, then the selection of positions  $x$ is optimal. 

One can find distinct concrete examples of the above general model in \cite{Nik}, \cite{Mal}, \cite{Op75}, \cite{Op75a}.
 
\section*{Acknowledgements}

This  second author acknowledges the support provided by the Deanship of Scientific Research at King Fahd University of Petroleum and Minerals for funding this work through the projects IN151014 and IN141047.

\vskip 0.25 cm {\it $^{a}$ Department of Physics, Mathematics and Information
Technologies\newline
\indent Tiraspol State University\newline 
\indent  Gh. Iablocikin 5., MD2069 Chi\c sin\u au, Republic of Moldova

E-mail: mmchoban@gmail.com }

\vskip 0.5 cm {\it $^{b}$ Department of Mathematics and Computer Science

North University Center at Baia Mare 

Technical University of Cluj-Napoca

Victoriei 76, 430072 Baia Mare ROMANIA

E-mail: vberinde@cunbm.utcluj.ro}

\vskip 0.25 cm {\it $^{c}$ Department of Mathematics and Statistics

King Fahd University of Petroleum and Minerals

Dhahran, Saudi Arabia

E-mail: vasile.berinde@gmail.com}

\begin{thebibliography}{99}

\bibitem{Aga14} Agarwal, R., Karapinar, E.,  Rold\' an-L\' opez-de-Hierro, A.-F., {\it Some remarks on `Multidimensional fixed point theorems for isotone mappings in partially ordered metric spaces'}. Fixed Point Theory Appl. 2014, 2014:245, 13 pp.

\bibitem{Aga} Agarwal, R., Karapinar, E.,  Rold\' an-L\' opez-de-Hierro, A.-F., {\it Fixed point theorems in quasi-metric spaces and applications to multidimensional fixed point theorems on $G$-metric spaces}. J. Nonlinear Convex Anal. 16 (2015), no. 9, 1787--1816.

\bibitem{Agha} Aghajani, A., Abbas, M. and Kallehbasti, E. P., \textit{Coupled fixed point theorems in partially ordered metric spaces and application},
Math. Commun.,  17 (2002), No. 2, 497--509

\bibitem{AghaFPT} Aghajani, A.  and Arab, R., \textit{Fixed points of $(\psi,\varphi,\theta)$-contractivemappings in partially ordered $b$-metric spaces and application to quadratic integral equations}, Fixed Point Theory Appl. \textbf{2013}, 2013:245  doi:10.1186/1687-1812-2013-245

\bibitem{Ber13}  Alghamdi, M. A., Berinde, V., Shahzad, N., {\it Fixed Points of Multivalued Nonself Almost Contractions}, J. Appl. Math.   \textbf{2013}, 2013: 621614.

\bibitem{Algha} Alghamdi, M. A., Hussain, N.  and Salimi, P., \textit{Fixed point and coupled fixed point theorems on $b$-metric-like spaces}, J. Ineq. Appl.  \textbf{2013}, 2013:402  doi:10.1186/1029-242X-2013-402

\bibitem{Al} Al-Mezel, S. A., Alsulami, H. H., Karapinar, E.  and Lopez-de-Hierro, A.-F. R.,   {\it Discussion on "Multidimensional Coincidence Points" via recent publications}, Abstr. Appl. Anal. 2014, Art. ID 287492, 13 pp.

\bibitem{Amini} Amini-Harandi, A., \textit{Coupled and tripled fixed point theory in partially ordered metric spaces with application to initial value problem}, Math. Comput. Model., \textbf{57} (2013), Nos 9--10, 2343--2348.

\bibitem{Aydi} Aydi, H.,  Samet, B. and Vetro, C. \textit{Coupled fixed point results in cone metric spaces for $\tilde{w}$-compatible mappings}, Fixed Point Theory Appl. \textbf{2011}, 2011:27  doi:10.1186/1687-1812-2011-27

\bibitem{Ba}     Bakhtin, I. A., {\it The contraction mapping principle in almost metric spaces} (in Russian),
        Funct. Anal., Ulianovskii Gosud. Pedag. Inst. {\bf 30} (1989), 26--37.
        
  \bibitem{Ber09}  Berinde, V., \textit{A common fixed point theorem for compatible quasi contractive self mappings in metric spaces}. Appl. Math. Comput. 213 (2009), no. 2, 348--354.
  
 \bibitem{Ber09a}  Berinde, V., \textit{Approximating common fixed points of noncommuting discontinuous weakly contractive mappings in metric spaces}. Carpathian J. Math. 25 (2009), no. 1, 13--22.
 
\bibitem{Ber10}  Berinde, V., \textit{Common fixed points of noncommuting discontinuous weakly contractive mappings in cone metric spaces}. Taiwanese J. Math. 14 (2010), no. 5, 1763--1776.
  
  \bibitem{Ber11}  Berinde, V., \textit{Generalized coupled fixed point theorems for mixed monotone mappings in partially ordered metric spaces}, Nonlinear Anal. \textbf{74} (2011) 7347--7355.
  
 \bibitem{Ber11a}  Berinde, V., \textit{Stability of Picard iteration for contractive mappings satisfying an implicit relation}. Carpathian J. Math. 27 (2011), no. 1, 13--23.
  
  \bibitem{Ber12}  Berinde, V., \textit{Coupled coincidence point theorems for mixed monotone nonlinear operators}. Comput. Math. Appl. {\bf 64} (2012), no. 6, 1770--1777.

\bibitem{Ber12a}  Berinde, V., \textit{Coupled fixed point theorems for $\Phi$-contractive mixed monotone mappings in partially ordered metric spaces}. Nonlinear Anal. {\bf 75} (2012), no. 6, 3218--3228.

  \bibitem{BB1}  Berinde, V., Borcut, M., {\it Tripled fixed point theorems for contractive type mappings in partially ordered metric spaces}. Nonlinear Anal. 74 (2011), no. 15, 4889--4897.
       
\bibitem{BC1}   Berinde, V. and  Choban, M. M.,  {\it Remarks on some completeness conditions involved in
several common fixed point theorems},  Creat. Math. Inform. {\bf 19} (2010), no. 1, 1--10.

\bibitem{BC2}  Berinde, V. and   Choban, M. M.,  {\it Generalized distances and their associate metrics.
Impact on fixed point theory},  Creat. Math. Inform. {\bf 22}
(2013), no. 1, 23--32.


\bibitem{BP} Berinde, V., P\u acurar, M., {\it Coupled fixed point theorems for generalized symmetric Meir-Keeler contractions in ordered metric spaces}. Fixed Point Theory Appl. 2012, 2012:115, 11 pp.

\bibitem{JNCA} Berinde, V., P\u acurar, M., {\it Coupled and triple fixed points theorems for mixed monotone almost contractive mappings in partially ordered metric spaces} (submitted)

\bibitem {BerP15} Berinde, V., P\u acurar, M., \textit{A constructive approach to coupled fixed point theorems in metric spaces}, Carpathian J. Math. {\bf 31} (2015), No. 3, 269--275

\bibitem{BS}  Berzig, M, Samet, B. {\it An extension of coupled fixed points concept in higher dimension and applications}, Comput.
Math. Appl. 63 (2012), 1319--1334.

\bibitem{Bo} Borcut, M.,  {\it  Tripled coincidence theorems for contractive type mappings in partially ordered metric spaces}, Appl.
Math. Comput. 218 (2012), 7339-7346.


\bibitem{MB} Borcut, M., {\it Puncte triple fixe pentru operatori defini\c ti pe spa\c tii metrice par\c tial ordonate},
Risoprint, Cluj-Napoca, 2016.

\bibitem{BB2} Borcut, M., Berinde, V., {\it  Tripled coincidence theorems for contractive type mappings in partially ordered metric spaces},
Appl. Math. Comput. 218 (2012), 5929--5936.

%\bibitem{Br}   Branciari, A., {\it  A fixed point theorem of Banach-Caccioppoli type on a class of generalized metric spaces}.  Publ. Math. (Debr.) {\bf 57} (2000), 31--37.



\bibitem{Chang96}  Chang, S.-S.,  Cho, Y. J., Huang, N. J., {\it Coupled fixed point theorems with applications}. J. Korean Math. Soc. 33 (1996), no. 3, 575--585.

\bibitem{Chang91}  Chang, S.-S.,  Ma, Y. H., {\it Coupled fixed points for mixed monotone condensing operators and an existence theorem of the solutions for a class of functional equations arising in dynamic programming}. J. Math. Anal. Appl. 160 (1991), no. 2, 468--479. 

\bibitem{Chen}  Chen, Y. Z., {\it Existence theorems of coupled fixed points}. J. Math. Anal. Appl. 154 (1991), no. 1, 142--150.

\bibitem{C1}  Choban, M., {\it Fixed points of mappings defined on spaces with distance}, Carpathian J. Math., 32 (2016), no. 2, 173--188. 

\bibitem{ChoB}  Choban, M., Berinde, V., {\it A general concept of multiple fixed point for mappings defined on  spaces with a distance} (submitted)

\bibitem{Cir}  \' Ciri\' c, L., Damjanovi\' c, B. , Jleli, M.  and B. Samet, \textit{Coupled fixed point theorems for generalized Mizoguchi-Takahashi contractions with applications}, Fixed Point Theory Appl. 2012, 2012:51  doi:10.1186/1687-1812-2012-51

 \bibitem{Cz}  Czerwik, S., {\it Fixed Points Theorems and Special Solutions of Functional Equations},
               Katowice, 1980.
               
 \bibitem{Dal}    Dalal, S., Khan, L.A., Masmali, I., Radenovic, S., {\it Some remarks on multidimensional fixed point theorems in partially ordered metric spaces}. J. Adv. Math. 7 (2014), no. 1, 1084--1094.
               
\bibitem{Eng}  Engelking, R.,  {\it General topology}. Translated from the Polish by the author. Second edition. Sigma Series in Pure Mathematics, 6. Heldermann Verlag, Berlin, 1989.

 \bibitem{Bha} Gnana Bhaskar, T., Lakshmikantham, V., {\it Fixed point theorems in partially ordered metric spaces and applications}. Nonlinear Anal. 65 (2006), no. 7, 1379--1393.
 
\bibitem {Gu}  Gu, F. and Yin, Y., \textit{A new common coupled fixed point theorem in generalized metric space and applications to integral equations}, Fixed Point Theory Appl. \textbf{2013}, 2013:266  doi:10.1186/1687-1812-2013-266

\bibitem {Guo88} Guo, D. J.,  {\it Fixed points of mixed monotone operators with applications}. Appl. Anal. 31 (1988), no. 3, 215--224.

\bibitem{GD}  Granas, A.  and Dugundji, J., {\it Fixed point theory}, Springer,  Berlin, 2003.


\bibitem {Guo}    Guo, D. J., Lakshmikantham, V., {\it Coupled fixed points of nonlinear operators with applications}. Nonlinear Anal. 11 (1987), no. 5, 623--632. 
                
  \bibitem{Hus}                 Hussain, N., Salimi, P.  and Al-Mezel, S., \textit{Coupled fixed point results on quasi-Banach spaces with application to a system of integral equations}, Fixed Point Theory Appl. \textbf{2013}, 2013:261  doi:10.1186/1687-1812-2013-261
  
 \bibitem{Imdad13}   Imdad, M., Soliman, A. H., Choudhury, B. S. and Das, P., {\it On n- tupled coincidence and common fixed points results in metric spaces}. J. Oper.  2013, Article ID 532867, 9 pages.
 
 \bibitem{Imdad13a}   Imdad, M., Sharma, A., Rao, K.P.R., {\it n-tupled coincidence and common fixed point results for weakly contractive mappings in complete metric spaces}. Bull. Math. Anal. Appl. 5 (2013), no. 4, 19--39.
 
 \bibitem{Imdad14}   Imdad, M., Alam, A., Soliman, A.H., {\it Remarks on a recent gen- eral even-tupled coincidence theorem}. J. Adv. Math. 9 (2014), no. 1, 1787--1805.
 
%\bibitem{JS}   Jleli, M., Samet, B., {\it A new generalization of the Banach contraction principle}. J. Inequal. Appl. 2014, 2014:38, 8 pp.
 
\bibitem{K}    Karapinar, E., {\it Quartet fixed points theorems for nonlinear contractions in partially ordered metric space},
arXiv:1106.5472v1 [math.GN] 27 Jun 2011, 10 p. 
 
\bibitem{KB}    Karapinar E. and Berinde, V., {\it Quadruple fixed points theorems for nonlinear contractions in partially ordered spaces},  Banach J. Math. Anal. 6 (2012), no. 1, 74--89.

\bibitem{Kar13} Karapinar, E., Rold\' an, A., {\it A note on `n-tuplet fixed point theorems for contractive type mappings in partially ordered metric spaces'}. J. Inequal. Appl. 2013, 2013:567, 7 pp.


\bibitem{Kar13a} Karapinar, E., Rold\' an, A., Mart\' inez-Moreno, J., Rold\' an, C., {\it Meir-Keeler type multidimensional fixed point theorems in partially ordered metric spaces}. Abstr. Appl. Anal. 2013, Art. ID 406026, 9 pp.


\bibitem{Lee}  Lee, H., Kim, S., {\it Multivariate coupled fixed point theorems on ordered partial metric spaces}. J. Korean Math. Soc. 51 (2014), no. 6, 1189--1207.

\bibitem{Mal}  Malishevsky, A. V., {\it Models of many goal-seeking elements combined functioning. I}, Automation and Remote Control 33, 1972,Êno. 11,Ê1828--1845 (Transl. from Russian: Avtomat. i Telemeh. 1972, no.Ê11,Ê92--110).

\bibitem{Meir} Meir, A., Keeler, E., {\it A theorem on contraction mappings}. J. Math. Anal. Appl. 28 (1969), 326--329.

\bibitem{Mutlu} Mutlu, A. G\" urdal, U., {\it An infinite dimensional fixed point theorem on function spaces of ordered metric spaces}. Kuwait J. Sci. \textbf{42} (2015), no. 3, 36--49.

\bibitem{Nik} Nikaido, H., {\it  Convex Structures and Economic Theory,}  Mathematics in Science and Engineering.Ê51. New York,ÊAcademic Press, 1968.
 
\bibitem{N}   Nedev,  S. I.,    {\it o-metrizable spaces},
Trudy Moskov. Mat.Ob-va  {\bf 24} (1971), 201--236 (English translation: Trans. Moscow Math. Soc.  {\bf  24} (1974), 213--247).

\bibitem{Ne1} Niemytzki, V.,  {\it On the third axiom of metric spaces}, Trans Amer. Math.
Soc.  {\bf 29} (1927), 507--513.

\bibitem{Ne2}  Niemytzki,  V.,  {\it \" Uber die Axiome des metrischen Raumes}, Math. Ann.  {\bf 104}
 (1931), 666--671.
 
 \bibitem{Nie06} Nieto, J. J., Rodriguez-Lopez, R.,\ \textit{Contractive mapping theorems in partially ordered sets and applications to ordinary differential  equations}, Order \textbf{22} (2005), no. 3, 223--239 (2006). 
 
\bibitem{Nie07} Nieto, J. J., Rodriguez-Lopez, R.,\ \textit{Existence and uniqueness of fixed point in partially ordered sets  and applications to ordinary differential  equations}, Acta. Math. Sin., (Engl. Ser.) \textbf{23}(2007), no. 12, 2205--2212.
 
\bibitem{Ola}  Olaoluwa, H., Olaleru, J., {\it Multipled fixed point theorems in cone metric spaces}. Fixed Point Theory Appl. 2014, 2014:43, 15 pp.


\bibitem{Op75}  Opoitsev, V. I.,  \textit{Heterogeneous and combined-concave operators}, Sib. Mat. Zh. 16 (1975), nr. 4, 597--605 (Transl. from Russian: Sib. Matem. Jurn.  \textbf{16} (1975), no. 4, 781--792).

\bibitem{Op75a}  Opoitsev, V. I., \textit{Dynamics of collective behavior. III. Heterogenic systems}, translated from Avtomat. i Telemeh. 1975, no. 1, 124--138 Automat. Remote Control 36 (1975), no. 1, 111--124.

\bibitem{Op78} Opoitsev, V. I., {\it Generalization of the theory of monotone and concave operators}, Tr. Mosk. Mat. Obs., 36 (1978), 237--273.

\bibitem{Op84}  Opoitsev, V. I.; Khurodze, T. A.,  \textit{Nelineinye operatory v prostranstvakh s konusom.  [Nonlinear operators in spaces with a cone}, Tbilis. Gos. Univ., Tbilisi, 1984. 271 pp.

\bibitem{Op86}  Opoitsev, V. I., \textit{Nelineinaya sistemostatika}.  \textit{[Nonlinear systemostatics]}  Ekonomiko-Matematicheskaya Biblioteka [Library of Mathematical Economics], 31. Nauka, Moscow, 1986.
 
\bibitem {Ran} Ran, A. C. M., Reurings, M. C. B., \textit{A fixed point theorem in partially ordered sets and some applications to matrix equations}, Proc. Amer. Math. Soc. \textbf{132} (2004), no. 5, 1435--1443.

\bibitem{R1} Rold\' an, A., Mart\' inez-Moreno, J.,  Rold\' an, C.,   {\it Multidimensional fixed point theorems in partially ordered complete metricspaces}, J. Math. Anal. Appl. 396 (2012), 536--545. 

\bibitem{R2} Rold\' an, A., Mart\' inez-Moreno, J., Rold\' an C. and Karapinar, E., {\it Multidimensional Fixed-Point Theorems in Partially Ordered Complete Partial Metric Spaces under $(\psi , \varphi )$-Contractivity Conditions}, Abstr. Appl. Anal. 2013, Art. ID 634371, 12 pp.

\bibitem{R3}   Rold\' an, A., Mart\' inez-Moreno, J.,  Rold\' an, C. and Karapinar, E., {\it Meir-Keeler type multidimensional fixed point theorems inpartially ordered metric spaces}, Abstr. Appl. Anal. 2013, Art. ID 406026, 9 pp.

\bibitem{R5}  Rold\' an, A., Mart\' inez-Moreno, J., Rold\' an, C., Karapinar, E. Some remarks on multidimensional fixed point theorems. Fixed Point Theory 15 (2014), no. 2, 545Ð558.

\bibitem{R4}  Rold\' an, A., Mart\' inez-Moreno, J., Rold\' an, C., Cho, Y. J., {\it Multidimensional fixed point theorems under $(\psi,\phi)$-contractive conditions in partially ordered complete metric spaces}. J. Comput. Appl. Math. 273 (2015), 76--87.

\bibitem{Rus} Rus, M.-D., {\it The fixed point problem for systems of coordinate-wise uniformly monotone operators and applications}. Mediterr. J. Math. 11 (2014), no. 1, 109Ð122.


\bibitem{RP}   Rus,  I. A., Petru\c sel, A.  and  Petru\c sel, G., {\it Fixed Point Theory}, Cluj University Press,
Cluj-Napoca, 2008.

\bibitem{SV} Samet, B., Vetro, C.,  {\it  Coupled fixed point, f-invariant set and fixed point of N-order}, Ann. Funct. Anal. 1 (2010), 46--56.

\bibitem{Samet} Samet, B., Vetro, C. and Vetro, P., \textit{Fixed point theorems for alpha-psi-contractive type mappings}, Nonlinear Anal.  \textbf{75} (2012), No. 4,  2154--2165.

\bibitem{Sang} Sang, Y., \textit{A class of $\varphi$-concave operators and applications}, Fixed Point Theory Appl. \textbf{2013}, 2013:274  doi:10.1186/1687-1812-2013-274

\bibitem{Sha} Sharma, A., Imdad, M., Alam, A., {\it Shorter proofs of some recent even-tupled coincidence theorems for weak contractions in ordered metric spaces}. Math. Sci. (Springer) 8 (2014), no. 4, 131--138.

\bibitem{Shat} Shatanawi, W., Samet, B. and Abbas, M., \textit{Coupled fixed point theorems for mixed monotone mappings in ordered partial metric spaces}, Math. Comput. Model. 55 (2012), Nos 3-4, 680--687

\bibitem{Shen} Shendge, G. R., Dasare, V. N., {\it Existence of maximal and minimal quasifixed points of mixed monotone operators by iterative technique}. in Methods of functional analysis in approximation theory (Bombay, 1985), pp. 401--410, Internat. Schriftenreihe Numer. Math., 76, Birkh\" auser, Basel, 1986.

\bibitem{Sint} Sintunavarat, W., Kumam, P. and Cho, Y. J., \textit{Coupled fixed point theorems for nonlinear contractions without mixed monotone property}, Fixed Point Theory Appl. \textbf{2012}, 2012:170  doi:10.1186/1687-1812-2012-170
  
\bibitem{Sol}   Soleimani R. G., Shukla, S. and Rahimi, H., {\it Some relations between $n$-tuple fixed point and fixed point results}. Rev. R. Acad. Cienc. Exactas F\' is. Nat. Ser. A Math. RACSAM 109 (2015), no. 2, 471--481.

%\bibitem{Sun}  Sun, J. X., {\it Fixed point for increasing operators and generalized fixed points}, Acta Math. Sinica 32 (1989), 457--463.

\bibitem{Urs}  Urs, C., \textit{Coupled fixed point theorems and applications to periodic boundary value problems}, Miskolc Math. Notes, \textbf{14} (2013), no. 1,   323--333

\bibitem{Wang} Wang, S., {\it Multidimensional fixed point theorems for isotone mappings in partially ordered metric spaces}. Fixed Point Theory Appl. 2014, 2014:137, 13 pp.

\bibitem{Wu} Wu, J.  and Liu, Y., \textit{Fixed point theorems for monotone operators and applications to nonlinear elliptic problems}, Fixed Point Theory Appl. \textbf{2013}, 2013:134  doi:10.1186/1687-1812-2013-134

\bibitem{Xiao} Xiao, J.-Z.,  Zhu, X.-H. and Shen, Z.-M., \textit{Common coupled fixed point results for hybrid nonlinear contractions in metric spaces}, Fixed Point Theory \textbf{ 14} (2013), No. 1,  235--249

\bibitem{ZCS} Zhu, L., Zhu, C.-X., Chen, C.-F., Stojanovi\' c, \v Z., {\it Multidimensional fixed points for generalized $\psi$-quasi-contractions in quasi-metric-like spaces}. J. Inequal. Appl. 2014, 2014:27, 15 pp.

\end{thebibliography}
\end{document}